\newcommand{\cH}{\mathrm{H}}
\newcommand{\sO}{\mathcal{O}}
\theoremstyle{plain}
\newtheorem{Theorem1}{Theorem}[section]
\newtheorem{Theorem2}[Theorem1]{Theorem}
\newtheorem{Shoulder}[Theorem1]{Lemma}
\newtheorem{Lemma2}[Theorem1]{Lemma}
\newtheorem{Lemma3}[Theorem1]{Lemma}
\newtheorem{proposition1}[Theorem1]{Proposition}
\newtheorem{fact}[Theorem1]{Fact}
\newtheorem{lemma4}[Theorem1]{Lemma}
\newtheorem{lemma5}[Theorem1]{Lemma}
\newtheorem{proposition2}[Theorem1]{Proposition}
\theoremstyle{definition}
\newtheorem{remark1}{Remark}
\newtheorem{remark3}[remark1]{Remark}
\newtheorem{remark4}[remark1]{Remark}
\title{An Enriques Classification Theorem for Surfaces in Positive Characteristic}
\author{Eugenia Ferrari}
\begin{document}
\maketitle
\begin{abstract}
We prove that a smooth projective surface $S$ over an algebraically closed field of characteristic $p>3$ is birational to an abelian surface if $P_1(S)=P_4(S)=1$ and $h^1(S,\sO_S)=2$.
\end{abstract}
\section{Introduction}
An often--tackled problem in algebraic geometry is that of characterizing projective varieties in terms of their birational invariants. In this frame, a classical result of Enriques states that a smooth complex surface $S$ with plurigenera $P_1(S)=P_4(S)=1$ and irregularity $h^1(S,\sO_S)=\dim S$ is birationally equivalent to an abelian surface (\cite{En1905}).\\
In the literature there are now several theorems birationally characterizing complex abelian varieties in terms of certain plurigenera and some other hypotheses. Among these results, Chen and Hacon proved in \cite{CH01} that a smooth complex projective variety $X$ with $P_1(X)=P_2(X)=1$ and $h^1(X,\sO_X)=\dim X$ is birational to an abelian variety.\\
What these works have in common is that they rely on the theory of generic--vanishing, and in particular on statements that are known to fail in positive characteristic (\cite{HK15}, \cite{Fi17}). Therefore it is still an open question whether and what kind of generalizations of Enriques' theorem hold when dealing with varieties defined over fields of positive characteristic.\\
When in characteristic zero, given a variety $X$, the dimension of $\cH^1(X,\sO_X)$ equals the dimension of $\mathrm{Alb}(X)$, the Albanese variety of $X$. In this situation, one can proceed to prove that the Albanese morphism gives a birational morphism between $X$ and $\mathrm{Alb}(X)$, provided that $h^1(X,\sO_X)=\dim X$. In positive characteristic $h^1(X,\sO_X)$ does not necessarily equal $\dim\mathrm{Alb}(X)$, so that if for a surface $S$ one fixes $h^1(S,\sO_S)=2$, then the Albanese variety of $S$ is not necessarily a surface.

While generalizations to higher dimension will be the topic of subsequent papers, the result proved here is

\begin{Theorem1}\label{mainthm}
Let $S$ be a smooth projective surface over an algebraically closed field of characteristic $p>3$. If
\begin{equation}
P_1(S)=P_4(S)=1,\qquad h^1(S,\sO_S)=2,
\end{equation}
then S is birational to an abelian surface.
\end{Theorem1}
The conditions $P_1(S)=P_4(S)=1$ imply that $P_2(S)=1$. The statement would be stronger if one could fix $P_1(S)=P_2(S)=1$ and make no requirement about $P_4(S)$. By the work done here, if one asks only for $P_1(S)=P_2(S)=1$ there would be only a few specific cases of surfaces that might cause the conclusion of the theorem to fail.\\
\\ 
The result is proven by considering the Kodaira dimension of $S$ and checking that those fixed invariants lead to a contradiction in all cases but when $\kappa(S)=0$ and $S$ is birational to an abelian surface.\\
Indeed, clearly, having a non-zero plurigenus implies that $\kappa(S)\neq -\infty$.\\
Moreover, $S$ cannot be of general type: the plurigenera are birational invariants and, from \cite[Corollary 1.8]{Ek88}, $P_2(S)\geq 2$ for a surface $S$ of general type.\\
Also, assume $\kappa(S)=0$. As observed in the proof of \cite[Theorem 1.2]{Li09}, if the characteristic is neither 2 nor 3 the $\mathrm{Pic}^0$ of a surface is reduced and $\Delta:=2h^1(S, \sO_S)-b_1$ is zero.\\
Therefore, looking at the table in \cite[Introduction and Preliminary Reductions]{BM77}, it is immediate that, since  $h^1(S, \sO_S)=2$, $S$ must be birational to an abelian surface.\\
$P_1(S)=1$ alone would have ruled out the cases of Enriques and hyperelliptic surfaces, but not K3 surfaces.\\
\\
From this discussion, either $\kappa(S)=1$ or $S$ is birational to an abelian surface. The work done  in the following sections is for ruling out the case $\kappa(S)=1$.

\subsection*{The condition $P_4(S)=1$.} As mentioned, assuming only $P_1(S)=P_2(S)=1$ would not allow to rule our a few  cases. The condition $P_4(S)=1$ is used to exclude one particular case of elliptic fibration onto a curve of genus zero when the characteristic of the base field is $5$ (see Remark~\ref{caratter5} and Remark~\ref{caratt5secondo}), and the case of an elliptic fibration over a curve of genus one  with exactly one multiple fibre which is not wild (see Remark~\ref{seilgenerecurvauno}).\\
In particular, in the first of the two cases mentioned, \cite[Lemma 3.3]{KU85} would imply that $\dim\mathrm{Alb}(S)=1$, therefore $\dim\mathrm{Alb}(S)<h^1(S,\sO_S)$, and this would not happen in characteristic zero. In the second case, i.e. the fibration onto a curve of genus one, by \cite[Lemma 3.4]{KU85} $\mathrm{Alb}(S)$ could be either a surface or a curve.
\subsection*{Characteristic $2$ and $3$.} The assumption that $p\geq 5$ is needed when $\kappa(S)=0$ to rule out the possibility of $S$ being birational to an hyperelliptic or a quasi--hyperelliptic surface, and when $\kappa(S)=1$. In this latter case, a part of what is done in Section~\ref{genere 1} goes through when $p=2,3$ (also considering the case of quasi-elliptic surfaces), and the computations made here allow only to say that, if such an $S$ exists, then it must be birational to a (quasi--)elliptic surface fibred over a curve $B$ of genus either $1$ or $0$. If $g(B)=1$, then the elliptic fibration has exactly one multiple fibre that is not wild, and if $g(B)=0$, then the fibration has either one or two multiple fibres, both wild.
\subsection*{Acknowledgements} The author is indebted to her advisor S. Tirabassi for pointing out the problem to her, for the many discussions, all the advice, and the constant encouragement. Also, the author would like to thank  F. Catanese, A. L. Knutsen and G. Scattareggia for insights and suggestions, and in particular A. L. Knutsen for reading a draft of this work and giving valuable advice. Thanks are due to T. Lundemo and M. R. Vodrup for pointing out misprints in a previous version of this paper. Part of this work was supported by the Meltzer Research Fund and by the project \textit{The Arithmetic of Derived Categories}, grant 261756 of the Research Council of Norway.

\section*{Conventions and notations}
$S$ is assumed to be a smooth projective surface over an algebraically closed field $k$ of characteristic $p>0$. We denote by $\omega_S$ the canonical bundle of $S$  and by $K_S$ a canonical divisor in $\omega_S$.\\
The Kodaira dimension of $S$ is denoted $\kappa(S)$ and for $i\in\mathbb{Z}$ $h^i(S,\cdot):=\dim \cH^i(S,\cdot)$. Also we denote by $\chi(S):=\sum_i(-1)^{i}h^{i}(S,\sO_S)$ the Euler characteristic of $S$.\\
The plurigenera of $S$ are $P_n(S):=\dim\cH^0(S,\omega_S^{\otimes n})$ for $n$ positive integer.

\section{Background and preliminary results} \label{genere 1}
As mentioned previously, a surface $S$ with $P_1(S)$, $P_2(S)$ and $h^1(S,\sO_S)$ fixed as from Theorem \ref{mainthm} must either be birational to an abelian variety or have Kodaira dimension $1$, and therefore the latter is the case which is to be studied and excluded.\\
Since the goal is to classify $S$ birationally having fixed those birational invariants, $S$ can be assumed to be a minimal surface. This assumption is helpful because for a minimal surface $S$ with $\kappa(S)=1$, in characteristic neither 2 nor 3, the Stein factorisation of the Iitaka fibration gives a relatively minimal elliptic fibration
\begin{equation}
f:\;S\longrightarrow B
\end{equation}
onto a non--singular curve $B$ (see for example \cite[Theorem 5.3]{Li12}).\\
The remainder of the work done here will deal with such an elliptic fibration, therefore in this section some basic facts about elliptic fibrations are recalled. More complete references for elliptic fibrations in characteristic $p$ are for example \cite[1. ]{BM77} and \cite[1.]{KU85}.\\
\\
Let $b_1,...b_r\in B$ be the finitely many points at which the fibre $f^{-1}(b_{\alpha})$ is multiple, that is to say:
\begin{equation}
f^{-1}(b_{\alpha})=m_{\alpha}P_{\alpha}
\end{equation}
with $m_{\alpha}\geq 2$ and $P_{\alpha}$ indecomposable of canonical type (recall that a curve $C=\sum n_iC_i$ is of canonical type if for all $i$ one has $(K_S\cdot C_i)=(C\cdot C_i)=0$).
Also let
\begin{equation}
R^1f_{\ast}\sO_S = L\oplus T \label{splittingR1}
\end{equation}
be the decomposition of $R^1f_{\ast}\sO_S$ into an invertible sheaf $L$ and a torsion sheaf $T$ supported over the points of the curve $B$ whose inverse image is a wild fibre.\\ 
The following theorem holds (\cite[Theorem 2]{BM77}):

\begin{Theorem2} \label{Kodairacanonical}
Let $f:\;S\longrightarrow B$ be a relatively minimal elliptic fibration and let $R^1f_{\ast}\sO_S = L\oplus T$. Then
\begin{equation}
\omega_S=f^{\ast}(L^{-1}\otimes \omega_B)\otimes \sO\big(\sum a_{\alpha}P_{\alpha}\big)
\end{equation}
where
\begin{itemize}
\item[a.] $m_{\alpha}P_{\alpha}$ are the multiple fibres;
\item[b.] $0\leq a_{\alpha}<m_{\alpha}$;
\item[c.] $a_{\alpha}= m_{\alpha}-1$ if $m_{\alpha}P_{\alpha}$ is not wild;
\item[d.] $\deg (L^{-1}\otimes\omega_B)= 2g(B)-2+\chi(\sO_S)+\mathrm{length}(T)$.
\end{itemize}
\end{Theorem2}
\subsection{Relations among numbers of the elliptic fibration}
To prove that an $S$ such as in Theorem~\ref{mainthm} is an abelian surface, one has to show that there cannot exist an elliptic fibration $f:\;S\longrightarrow B$. The first step for achieving this is to show that if such a fibration existed the genus of the base curve would be bounded by having fixed $h^1(S,\sO_S)$.

\begin{Shoulder} \label{Shoulder}
Let $f:\;S\longrightarrow B$ be a quasi--elliptic surface or an elliptic surface. Then
\begin{equation}
g(B)\leq \dim\cH^1(S,\sO_S)
\end{equation}
\end{Shoulder}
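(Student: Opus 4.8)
The plan is to use the Leray spectral sequence for $f\colon S\to B$ together with the decomposition $R^1f_*\sO_S=L\oplus T$ from \eqref{splittingR1}. First I would write down the low-degree exact sequence
\begin{equation}
0\longrightarrow \cH^1(B,f_*\sO_S)\longrightarrow \cH^1(S,\sO_S)\longrightarrow \cH^0(B,R^1f_*\sO_S)\longrightarrow \cH^2(B,f_*\sO_S).
\end{equation}
Since $f$ has connected fibres and $B$ is a smooth curve, $f_*\sO_S=\sO_B$, so $\cH^1(B,f_*\sO_S)=\cH^1(B,\sO_B)$ has dimension $g(B)$, while $\cH^2(B,\sO_B)=0$ because $B$ is a curve. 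Hence the sequence gives an \emph{injection} $\cH^1(B,\sO_B)\hookrightarrow \cH^1(S,\sO_S)$, which already yields $g(B)=h^1(B,\sO_B)\leq h^1(S,\sO_S)$.

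The main point to be careful about is that $f_*\sO_S=\sO_B$. This holds for a relatively minimal (quasi--)elliptic fibration onto a smooth curve with connected fibres: $f_*\sO_S$ is a torsion-free, hence locally free, rank-one sheaf on $B$, and the inclusion $\sO_B\hookrightarrow f_*\sO_S$ is an isomorphism because the generic fibre is geometrically connected and $B$ is normal (equivalently, one may replace $f$ by its Stein factorisation, which changes nothing since the fibration is already relatively minimal with connected fibres). I would state this as the one genuinely nontrivial input and cite \cite[1.]{BM77} or \cite[1.]{KU85} for it; everything else is the formal edge-exact sequence of the Leray spectral sequence plus $\dim B=1$.

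I do not expect a serious obstacle here; the only thing worth double-checking is that the argument is uniform in the quasi--elliptic case (characteristic $2$ and $3$), where the fibres are rational cuspidal curves rather than genuine elliptic curves. But connectedness of fibres and $\dim B=1$ are all that the spectral-sequence argument uses, so the quasi--elliptic case is covered verbatim. Thus the proof reduces to: (i) $f_*\sO_S=\sO_B$; (ii) the Leray five-term sequence; (iii) $\cH^2(B,\sO_B)=0$ since $B$ is a curve — giving $\cH^1(B,\sO_B)\hookrightarrow\cH^1(S,\sO_S)$ and hence the claimed bound.
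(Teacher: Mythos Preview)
Your proof is correct and follows essentially the same approach as the paper: both invoke the Leray spectral sequence for $f$ together with $f_*\sO_S=\sO_B$. The only difference is that the paper argues full degeneration at $E_2$ (using that $B$ is a curve) and records the splitting $\cH^1(S,\sO_S)\cong\cH^0(B,L)\oplus\cH^0(B,T)\oplus\cH^1(B,\sO_B)$, which it reuses in the proof of Lemma~\ref{relations}, whereas you extract only the injection $\cH^1(B,\sO_B)\hookrightarrow\cH^1(S,\sO_S)$ from the five-term sequence---sufficient for the stated inequality but slightly less than what the paper later needs.
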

\begin{proof}
Indeed, one has a Leray spectral sequence
\begin{equation}
\mathrm{E}^{p,q}_2=\cH^p(B,R^qf_{\ast}\sO_S)\qquad \Longrightarrow \qquad E^{p+q}=\cH^{p+q}(S,\sO_S)
\end{equation}
By \cite[III, Proposition 8.1]{Ha77} the sheaves $R^qf_{\ast}\sO_S$ are trivial except possibly when $0\leq q\leq 2$, and so the page two of the above spectral sequence has zeroes except for a rectangle consisting of two objects in the horizontal direction and three in the vertical one. It follows that at page two there are no differentials between two nonzero vector spaces, and thus the sequence degenerates here and therefore $\cH^1(S,\sO_S)$ can be split as:
\begin{equation}\label{splitH1}
\begin{split}
\cH^1(S,\sO_S)&=\cH^0(B,R^1f_{\ast}\sO_S)\oplus \cH^1(B,f_{\ast}\sO_S)\\
&=\cH^0(B,L)\oplus \cH^0(B,T)\oplus \cH^1(B,\sO_B)
\end{split}
\end{equation}
and $\cH^1(B,\sO_B)$ has dimension $g(B)$ since $B$ is smooth. The conclusion follows.
\end{proof}
The following lemma provides some useful relations among numbers connected to objects on the base curve $B$.

\begin{Lemma2} \label{relations}
Let $f:\;S\longrightarrow B$ be a minimal elliptic surface with $h^1(S, \sO_S)=2$ and $P_1(S)=1$. Let $L$ and $T$ be as in (\ref{splittingR1}). Then 
\begin{enumerate}

\item[(i)] $h^0(B,L)-\deg L+g(B)=2$; \label{l1}
\item[(ii)] $\deg L = -\,h^0(B,T)\leq 0$;  \label{l2}
\item[(iii)] $h^1(B,L)=1 $; \label{l3}
\item[(iv)] $g(B)\leq 2$. \label{l4}
\end{enumerate}
\end{Lemma2}
\begin{proof}
Given those invariants, $\chi(S)=h^0(S,\sO_S)-h^1(S,\sO_S)+h^2(S,\sO_S)=1-2+1=0$. Therefore, from Theorem~\ref{Kodairacanonical} it follows that 
\begin{equation}
\deg L= -\mathrm{length}(T)=-h^0(B,T);
\end{equation}
and this proves (ii).\\
From Lemma \ref{Shoulder}, one immediately has (iv).\\
From (\ref{splitH1}) and (ii) one gets (i). By (i) and the Riemann--Roch Theorem for curves
\begin{equation}
h^0(B,L)-\deg L+g(B)=h^1(B,L)+1,
\end{equation}
one gets (iii).
\end{proof}

\subsection{An inequality for the plurigenera}
When, with the notation of Theorem~\ref{Kodairacanonical}, $\omega_S=\sO\Big(\sum a_{\alpha}P_{\alpha}\Big)$, one gets a lower bound for $P_2(S)$ depending on the number of the fibres that give rise to an $a_{\alpha}$ as big as possible (i.e. either multiple fibres that are not wild, or strange fibres: wild fibres with $a_{\alpha}=m_{\alpha}-1$, following the terminology of \cite{KU85}).

\begin{Lemma3}
\label{relationforp2}
Let $f:\;S\longrightarrow B$ be a minimal elliptic surface. Assume that, with the notation of Theorem~\ref{Kodairacanonical}, $\omega_S=\sO\Big(\sum a_{\alpha}P_{\alpha}\Big)$.\\
Let $I$ be the set of the indices $\alpha$ such that $a_{\alpha}=m_{\alpha}-1$. If $n,t\in\mathbb{N}_{>0}$ are such that $t\leq \frac{n}{2}$, then
\begin{equation}
P_n(S)\geq \vert I\vert\cdot t+1-g(B),
\end{equation}
where $\vert I\vert$ denotes the cardinality of $I$.
\end{Lemma3}
\begin{proof}
Assume that 
\begin{equation} \label{assumptionprooflemma}
h^0(S, \omega_S^{\otimes n})\geq h^0\Bigg(S,\sO\bigg(\sum_{\alpha\in I}tm_{\alpha}P_{\alpha}\bigg)\Bigg).
\end{equation}
Then 
\begin{align}
P_n(S)=h^0(S, \omega_S^{\otimes n})&\geq h^0\Bigg(S,\sO\bigg(\sum_{\alpha\in I}tm_{\alpha}P_{\alpha}\bigg)\Bigg)\nonumber\\
&= h^0\bigg(S, f^{\ast}\bigg(\sO\bigg(\sum_{\alpha\in I}tb_{\alpha}\bigg)\bigg)\bigg)\nonumber\\
&= h^0\bigg(B, \sO\bigg(\sum_{\alpha\in I} tb_{\alpha}\bigg)\bigg)\qquad \qquad\qquad \qquad(\mathrm{connected\; fibres})\nonumber\\
&= h^1\bigg(B, \sO\bigg(\sum_{\alpha\in I} tb_{\alpha}\bigg)\bigg)+\sum_{\alpha\in I} t+1-g(B)\nonumber
\end{align}
by Riemann--Roch for curves, thence the statement of the lemma.\\
\\
It remains to verify (\ref{assumptionprooflemma}). That is true if
$$nK_S\geq\sum_{\alpha\in I}n(m_{\alpha}-1)P_{\alpha}\geq \sum_{\alpha\in I}tm_{\alpha}P_{\alpha},$$
and in turn that holds if, for every $\alpha$, 
$$n(m_{\alpha}-1)\geq tm_{\alpha},$$
which, being $n>t$, is equivalent to
$$m_{\alpha}\geq 1+ \frac{t}{n-t}.$$
Since $m_{\alpha}\geq 2$, this latter inequality is satisfied if
$$\frac{t}{n-t}\leq 1,$$
that is, again by $n>t$, when $t\leq \frac{n}{2}$.
\end{proof}

\section{Proof of Theorem~\ref{mainthm}}
From what said up to now, in order to prove Theorem~\ref{mainthm} one has only to show that a minimal surface $S$ with $P_1(S)=P_4(S)=1$ and $h^1(S,\sO_S)=2$ and Kodaira dimension $1$ cannot have an elliptic fibration $f:\;S\longrightarrow B$ onto a curve of genus $g(B)\leq 2$.\\
\\
As a first reduction, $g(B)$ cannot be $2$ because of $P_2(S)=1$. Indeed, given an elliptic fibration $f:\;S\longrightarrow B$ with $P_2(S)=1$,
\begin{align}
1=h^0(S, \omega_S^{\otimes 2})&=h^0(S, f^{\ast}(L^{-1}\otimes \omega_B)^2\otimes \mathrm{eff.divisor})\qquad\qquad (\mathrm{Theorem}~\ref{Kodairacanonical})\nonumber\\
&\geq h^0(S, f^{\ast}(L^{-2}\otimes \omega_B^2))\nonumber\\
&=h^0(B, L^{-2}\otimes \omega_B^2)\nonumber\\
&\geq h^0(B, L^{-2}\otimes \omega_B^2)-h^1(B, L^{-2}\otimes \omega_B^2)\nonumber\\
&= 1-g(B) +\deg (L^{-2}\otimes \omega_B^2)\qquad\qquad\qquad (\mathrm{Riemann-Roch})\nonumber\\
&=1-g(B)+ 4g(B)-4-2\deg L\nonumber\\
&=3g(B)-3-2\deg L \nonumber\\
&=3\cdot 2-3-2\cdot 0 \nonumber\\
&=3\nonumber
\end{align}
where the second to last equality holds if $g(B)=2$ because of (i) of Lemma~\ref{relations}: it must be $h^0(B,L)-\deg L=0$, and by (ii) of Lemma~\ref{relations} both $h^0(B,L)$ and $-\deg L$ are non--negative, therefore $\deg L=0$.\\
\\
The following sections deal with the remaining cases $g(B)=0$ and $g(B)=1$. In both these cases the relations of Lemma~\ref{relations} together with the formulas in Theorem~\ref{Kodairacanonical} allow to write $\omega_S$ as sheaf associated to a particular effective divisor coming from the multiple fibres of the elliptic fibration.

\subsection{Genus of the base curve equals one} 
Assume to have an elliptic fibration $f:\;S\longrightarrow B$, where $S$ has the birational invariants fixed as in Theorem~\ref{mainthm}. The purpose of this section is to show that the genus of $B$ cannot be one. If $g(B)=1$, by (i) of Lemma~\ref{relations}, one gets that either $h^0(B,L)=0$ and $\deg L=-1$ or  $h^0(B,L)=1$ and $\deg L=0$.\\
The first case can be written off because of $P_2(S)=1$ in a similar fashion to what has been done to rule out $g(B)=2$. Indeed, again by Theorem~\ref{Kodairacanonical}, projection formula and the Riemann--Roch Theorem for curves:
\begin{equation}
1=h^0(S, \omega_S^{\otimes 2})\geq 3g(B)-3-2\deg L =3\cdot 1-3-2\cdot(-1)=2.
\end{equation}

So, assume $h^0(B,L)=1$ and $\deg L=0$. It follows that $L$ must be $\sO_B$ because of \cite[IV, Lemma 1.2]{Ha77}. Since $h^0(B,T)=-\deg L=0$ and $T$ is torsion, it follows that $R^1f_{\ast}\sO_S=\sO_B$ and that there are no wild fibres.\\
By Theorem~\ref{Kodairacanonical},
\begin{equation}
\omega_S=\sO\Big(\sum (m_{\alpha}-1)P_{\alpha}\Big).
\end{equation} 
If there were at least two multiple fibres, Lemma~\ref{relationforp2} would imply that $P_2(S)\geq 2$. If there were no multiple fibres the canonical bundle of $S$ would be trivial, so $\kappa(S)=0$. 

\begin{remark1} \label{seilgenerecurvauno}
Without asking $P_4(S)=1$, the only case left open for $g(B)=1$ is when the fibration has exactly one multiple fibre which is not wild. Then Lemma~\ref{relationforp2} yields $P_n(S)\geq 2$ for $n\in\mathbb{N}_{\geq 4}$.
\end{remark1}

\subsection{Genus of the base curve equals zero}
From what previously done, a surface $S$ which satisfies the conditions of Theorem~\ref{mainthm}, is either birational to an abelian surface or it has $\kappa(S)=1$ and there is an elliptic fibration $f:\;S\longrightarrow B$ onto a smooth curve $B$ with $g(B)=0$. Therefore, Therem~\ref{mainthm} is proved on condition of proving the following:

\begin{proposition1} \label{proposizionepergenerezero}
Let $S$ be a minimal surface over an algebraically closed field of characteristic $p>5$ with $P_1(S)=P_2(S)=1$, $h^1(S,\sO_S)=2$ and Kodaira dimension 1. Then the elliptic fibration of $S$ cannot be onto a curve of genus 0.
\end{proposition1}
Having fixed $h^1(S,\sO_S)$, $P_1(S)$ and characteristic $p>3$, by (i) of Lemma~\ref{relations} there would be three possible cases:
\begin{enumerate}
\item $h^0(B,L)=2$, $\deg L=0$. By \cite[IV, Lemma 1.2]{Ha77}, $L\simeq \sO_{B}$, but then $h^0(B,L)=1$, contradiction.

\item $h^0(B,L)=1$, $\deg L=-1$. Impossible, an effective divisor has positive degree.
\item $h^0(B,L)=0$, $\deg L=-2$. Since the genus is $0$, $L\simeq \sO(-2)\simeq \omega_B$. Therefore, by Theorem~\ref{Kodairacanonical},
\begin{equation} \label{canonicalgenus0}
\omega_S=\sO\Big(\sum a_{\alpha}P_{\alpha}\Big)
\end{equation}
\end{enumerate}
So, if such a surface as in the proposition existed, one would be in the last case.\\
\\
If no fibre appeared in the expression of the canonical bundle, then the latter would be trivial, and the Kodaira dimension would not be 1. If there was at least one multiple fibre not wild, then, by Lemma~\ref{relationforp2}, $P_2(S)\geq 2$, contradicting $P_2(S)=1$.\\
By (ii) of Lemma~\ref{relations} and $\deg L=-2$, there can be at most two wild fibres. So it remains to exclude the two cases when there is exactly one multiple fibre which is wild and when there are exactly two multiple fibres, both wild. These two cases are addressed by the next two lemmas. Both of them are based on the following fact, used many times in \cite{KU85}:

\begin{fact} \label{factcohomology}
Let $f:\;S\longrightarrow B$ be an elliptic surface. Then, using the notation already introduced,
\begin{equation}
h^0(m_{\alpha}P_{\alpha}, \sO_{m_{\alpha}P_{\alpha}})=1+h^0(B, T\otimes_{\sO_B}k(b_{\alpha}))
\end{equation}
\end{fact}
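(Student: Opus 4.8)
The final stated result is Fact~\ref{factcohomology}: for an elliptic surface $f\colon S\to B$, with $m_\alpha P_\alpha$ a multiple fibre and $T$ the torsion part of $R^1f_\ast\sO_S$, one has
\[
h^0(m_\alpha P_\alpha,\sO_{m_\alpha P_\alpha})=1+h^0(B,T\otimes_{\sO_B}k(b_\alpha)).
\]

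\textbf{Overall strategy.} The plan is to compute the cohomology of the structure sheaf of the (possibly non-reduced) fibre $m_\alpha P_\alpha$ by comparing it, via the theorem on formal functions, with the behaviour of $R^1f_\ast\sO_S$ near the point $b_\alpha\in B$. Write $D_\alpha=m_\alpha P_\alpha=f^{-1}(b_\alpha)$ as a divisor on $S$. The key observation is that, since $b_\alpha$ is a single reduced point of $B$, the scheme $D_\alpha$ is exactly the fibre product $S\times_B\operatorname{Spec} k(b_\alpha)$, so $\sO_{D_\alpha}=f^\ast k(b_\alpha)$ and $\cH^\bullet(D_\alpha,\sO_{D_\alpha})$ is governed by the derived pullback $Lf^\ast k(b_\alpha)$ together with the base-change/projection data.

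\textbf{Step 1: reduce to a local computation on $B$.} First I would pass to $\operatorname{Spec}\sO_{B,b_\alpha}$, or even to the completion, which is legitimate because both sides of the claimed identity are insensitive to this localisation: $h^0(D_\alpha,\sO_{D_\alpha})$ is computed on the fibre, and $h^0(B,T\otimes k(b_\alpha))=\dim_k T\otimes_{\sO_B}k(b_\alpha)$ is the rank of the stalk of $T$ modulo its maximal ideal, a purely local quantity at $b_\alpha$. So let $R=\sO_{B,b_\alpha}$, a DVR with uniformiser $s$ and residue field $k$, and let $X=S\times_B\operatorname{Spec} R\to\operatorname{Spec} R$.

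\textbf{Step 2: use the two-term complex computing $Rf_\ast\sO_S$ locally.} Since the fibres of $f$ are curves (cohomological dimension $1$) and $\operatorname{Spec} R$ is affine, $Rf_\ast\sO_X$ is represented by a two-term complex $[E^0\xrightarrow{\ d\ } E^1]$ of finite free $R$-modules, with $\cH^0=f_\ast\sO_X=R$ (connected fibres) and $\cH^1=(R^1f_\ast\sO_S)_{b_\alpha}=R\oplus T_{b_\alpha}$ by the splitting \eqref{splittingR1}, where $T_{b_\alpha}$ has finite length $n_\alpha$. The theorem on formal functions (or cohomology and base change) gives $\cH^\bullet(D_\alpha,\sO_{D_\alpha})=\cH^\bullet$ of $[E^0\otimes_R k\to E^1\otimes_R k]$. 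Thus $h^0(D_\alpha,\sO_{D_\alpha})=\dim_k\ker(d\otimes k)$ and $h^1(D_\alpha,\sO_{D_\alpha})=\dim_k\operatorname{coker}(d\otimes k)$.

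\textbf{Step 3: read off the ranks.} Now it is linear algebra over the DVR $R$. Since $\cH^0(Rf_\ast\sO_X)=\ker d=R$ is free, $d$ has a well-defined rank; put $r=\operatorname{rk} E^1-\dim_k\operatorname{coker} d$ away from the special point, and use the structure theorem for the map $d$ of free $R$-modules (Smith normal form over the PID-localised-at-$s$, i.e.\ elementary divisors). One finds $\dim_k\ker(d\otimes k)=\dim_k\ker d\otimes k+(\text{number of elementary divisors of }d\text{ divisible by }s)$. The first summand: $\ker d=R=f_\ast\sO_X$ contributes $1$. The second summand counts exactly the length of the torsion of $\operatorname{coker} d=R^1f_\ast\sO_X=R\oplus T_{b_\alpha}$ that survives modulo $s$, which is $\dim_k(T_{b_\alpha}/sT_{b_\alpha})=\dim_k T\otimes_{\sO_B}k(b_\alpha)=h^0(B,T\otimes k(b_\alpha))$; the free summand $R$ of $R^1f_\ast$ contributes nothing to $\ker(d\otimes k)$ (it is a genuine cokernel direction, i.e.\ an elementary divisor equal to a unit up to the split part). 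Assembling these gives precisely $h^0(D_\alpha,\sO_{D_\alpha})=1+h^0(B,T\otimes_{\sO_B}k(b_\alpha))$.

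\textbf{Main obstacle and alternatives.} The delicate point is Step~3: one must be careful that the \emph{free} part of $R^1f_\ast\sO_S$ (the invertible sheaf $L$) does not contribute to $h^0$ of the fibre, while the torsion part $T$ contributes its $s$-torsion corank. This is the content of why tame fibres ($T=0$ near $b_\alpha$) give $h^0(D_\alpha,\sO_{D_\alpha})=1$ and wild fibres give a strictly larger value equal to $1+h^0(B,T\otimes k(b_\alpha))$, matching the definition of wildness. A cleaner packaging, which I would probably use in the written proof, is to invoke directly the base-change exact sequence: for the DVR $R$ and the sheaf $\sO_X$, there is the Grothendieck complex and the five-term/connecting exact sequence
\[
0\to R^1f_\ast\sO_S\otimes k(b_\alpha)\to \cH^1(D_\alpha,\sO_{D_\alpha})\to \operatorname{Tor}_1^{\sO_B}\!\big(f_\ast\sO_S,k(b_\alpha)\big)\to 0,
\]
combined with $f_\ast\sO_S=\sO_B$ (so the $\operatorname{Tor}_1$ term vanishes, giving $h^1(D_\alpha,\sO_{D_\alpha})=1+\dim_k L\otimes k(b_\alpha)+\dim_k T\otimes k(b_\alpha)$... here one must instead track $\cH^0$) together with the Euler-characteristic identity $\chi(\sO_{D_\alpha})=0$ (the fibre of a relatively minimal genus-one fibration has arithmetic genus $1$, so $h^0-h^1=1-1=0$ for the reduced fibre and remains $0$ for the multiple fibre since $D_\alpha$ is numerically a multiple of a fibre and $(D_\alpha\cdot D_\alpha)=0$, $(K_S\cdot D_\alpha)=0$). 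From $h^0(D_\alpha,\sO_{D_\alpha})=h^1(D_\alpha,\sO_{D_\alpha})$ and the computation of $h^1$ via base change, the Fact drops out after cancelling the free contribution. Either route is short; the honest subtlety is only in isolating the torsion corank, and one should cite \cite[1.]{BM77} or \cite[1.]{KU85} where exactly this computation underpins the definition of $T$.
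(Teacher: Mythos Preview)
Your main argument (Steps 1--3) via the Grothendieck two-term complex and Smith normal form over the local DVR is correct and yields $h^0$ directly, but it is heavier than what the paper does. The paper's proof is precisely the ``cleaner packaging'' you sketch at the end: first show $\chi(\sO_{m_\alpha P_\alpha})=0$ via Riemann--Roch on $S$ (using that fibres are of canonical type, so the intersection terms $m_\alpha P_\alpha\cdot(m_\alpha P_\alpha+K_S)$ vanish), hence $h^0=h^1$; then compute $h^1$ by cohomology and base change in the top fibre degree (which always holds, cf.\ \cite[Corollary~3]{Mu12}), giving $h^1(m_\alpha P_\alpha,\sO_{m_\alpha P_\alpha})=\dim_k\big((L\oplus T)\otimes k(b_\alpha)\big)=1+h^0(B,T\otimes k(b_\alpha))$. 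Your direct route has the merit of bypassing the $\chi=0$ step and exhibiting exactly where the torsion corank enters (as the number of non-unit elementary divisors of $d$), at the cost of more bookkeeping; the paper's route is a two-line argument once one remembers that base change is automatic in top degree. Two small slips in your ``alternative'' paragraph: the displayed $h^1$ should read $\dim_k L\otimes k(b_\alpha)+\dim_k T\otimes k(b_\alpha)$ without the leading ``$1+$'' (the $1$ \emph{is} $\dim_k L\otimes k(b_\alpha)$), and the right-hand $\operatorname{Tor}_1$ term in your exact sequence should involve $R^2f_\ast\sO_S$ rather than $f_\ast\sO_S$ --- both vanish, so the conclusion is unaffected, but as written the sequence is not the universal-coefficient one.
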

\begin{proof}
As mentioned in the proof of \cite[Lemma 1.2]{KU85}, one has $$h^0(m_{\alpha}P_{\alpha}, \sO_{m_{\alpha}P_{\alpha}})=h^1(m_{\alpha}P_{\alpha}, \sO_{m_{\alpha}P_{\alpha}}).$$ Indeed, by 
\begin{equation}
0 \longrightarrow \sO_S(-m_{\alpha}P_{\alpha}) \longrightarrow \sO_S \longrightarrow \sO_{m_{\alpha}P_{\alpha}} \longrightarrow 0
\end{equation}
one gets $\chi(\sO_S(-m_{\alpha}P_{\alpha}))+\chi(\sO_{m_{\alpha}P_{\alpha}})=\chi(\sO_S)$, and the claim $\chi(\sO_{m_{\alpha}P_{\alpha}})=0$ follows from the Riemann--Roch theorem for surfaces and the fact that the fibres of $f$ are curves of canonical type:
\begin{equation}
\chi(\sO_S(-m_{\alpha}P_{\alpha}))=\chi(\sO_S)+\frac{-m_{\alpha}P_{\alpha}\cdot(-m_{\alpha}P_{\alpha}-K_{S})}{2}=\chi(\sO_S).
\end{equation}
Then the statement of the fact holds because of the equalities
\begin{align*}
h^0(m_{\alpha}P_{\alpha}, \sO_{m_{\alpha}P_{\alpha}})&=h^1(m_{\alpha}P_{\alpha}, \sO_{m_{\alpha}P_{\alpha}})=h^0(B, R^1f_{\ast}\sO_S\otimes k(b_{\alpha}))\\
&=h^0(B, L\otimes k(b_{\alpha}))+ h^0(B, T\otimes k(b_{\alpha}))=1+h^0(B, T\otimes k(b_{\alpha})),
\end{align*}
where the second equality holds by cohomology and base change (\cite[Corollary 3]{Mu12}).
\end{proof}

\begin{lemma4}
Let $S$ be a minimal surface over an algebraically closed field of characteristic $p>3$ with $P_1(S)=P_2(S)=1$, $h^1(S,\sO_S)=2$ and Kodaira dimension 1. Assume that the elliptic fibration of $S$ is onto a curve $B$ of genus 0. Then it is not possible to have exactly 2 multiple fibres, both wild.
\end{lemma4} 
\begin{proof}
Let $m_1P_1$ and $m_2P_2$ be the two wild fibres. By Fact~\ref{factcohomology}, $h^0(m_iP_{i}, \sO_{m_{i}P_{i}})=2$. So one can apply \cite[Lemma 2.4 (i)]{KU85} and see that

\begin{equation}
a_i=\left\{
\begin{array}{c}
m_i-1\\
\mathrm{or}\\
m_i-\nu_i-1,
\end{array} \right.
\end{equation}
where $\nu_i$ are positive integers defined in \cite[1. Preliminaries]{KU85}. If for at least one of the two fibres the first equality held, with $P_2(S)$ one would get a contradiction applying Lemma~\ref{relationforp2}. So, one can assume that for both fibres
\begin{equation}
a_i=p^{\delta_i}\nu_i-\nu_i-1,
\end{equation}
where $m_i=p^{\delta_i}\nu_i$ because of \cite[(1.6)]{KU85}, and $\delta_i\geq 1$ because $a_i\geq 0$. It is worth noticing that at least one of the two $a_i$ must be strictly positive, otherwise by (\ref{canonicalgenus0}) $\omega_S$ would be trivial, impossible because of $\kappa(S)=1$.\\
The goal here is to reach a contradiction by showing that this would lead to $P_2(S)\geq 2$.\\

First, for an $i$ such that $a_i\neq 0$,
$$2a_i=2(p^{\delta_i}\nu_i-\nu_i-1)\geq p^{\delta_i}\nu_i=m_i.$$
Indeed, that inequality is equivalent to:
\begin{equation*}
2\geq \frac{p^{\delta_i}\nu_i}{p^{\delta_i}\nu_i-\nu_i-1}=\frac{p^{\delta_i}\nu_i-\nu_i-1+\nu_i+1}{p^{\delta_i}\nu_i-\nu_i-1}=1+\frac{\nu_i+1}{p^{\delta_i}\nu_i-\nu_i-1},
\end{equation*}
which is true if $\frac{\nu_i+1}{p^{\delta_i}\nu_i-\nu_i-1}\leq 1$, that is: $p^{\delta_i}\nu_i-2\nu_i-2\geq 0$. Since $\nu_i,\delta_i\geq 1$, that equality is satisfied if $p^{\delta_i}-4\geq 0$ holds. Since the characteristic is neither $2$ nor $3$, that inequality is satisfied.\\

Having proved above that $2a_i\geq m_i$,  for a fixed $i$ one has that
\begin{equation}
2K_S\geq 2a_iP_i\geq m_iP_i
\end{equation}
and so
\begin{equation}
h^0(S, \omega_S^{\otimes 2})\geq h^0(S, \sO(m_iP_i))
\end{equation}
then, similarly to the proof of Lemma~\ref{relationforp2},
\begin{align}
P_{2}(S)=h^0(S, \omega_S^{\otimes 2})&\geq h^0(S, \sO(m_iP_i))\nonumber\\
&= h^0(S, f^{\ast}(\sO(b_{i}))\nonumber\\
&= h^0(B, \sO(b_{i}))\nonumber\\
&= h^1(B, \sO(b_{i}))+1+1-g(B)\geq 2\nonumber
\end{align}
which contradicts $P_2(S)=1$.
\end{proof}
Finally, to conclude the proof of Proposition~\ref{proposizionepergenerezero} one has to exclude the case of an elliptic fibration with exactly one multiple fibre which is wild, and the following lemma deals with that case.

\begin{lemma5} \label{onewildlemma}
Let $S$ be a minimal surface over an algebraically closed field of characteristic $p>5$ with $P_1(S)=P_2(S)=1$, $h^1(S,\sO_S)=2$ and Kodaira dimension 1. Assume that the elliptic fibration of $S$ is onto a curve $B$ of genus 0. Then it is not possible to have exactly 1 multiple fibre which is wild.
\end{lemma5}  
\begin{proof}
Let $mP$ be the wild fibre over the point $b$ of $B$, $a\geq 1$ the coefficient of $P$ in (\ref{canonicalgenus0}). Then
\begin{enumerate}
\item The hypotheses of \cite[Corollary 4.2]{KU85} are satisfied. Therefore $m=p^{\delta}$ (with $\delta$ positive integer) and $\nu=1$.
\item Since $T$ has length $2$ and is supported only on $b$, by Claim~\ref{factcohomology} $h^0(mP,\sO_{mP})=1+2=3$. 
\end{enumerate}
By these two facts, one can apply \cite[Lemma 2.4 (ii)]{KU85} and get the following possibilities:
\begin{equation}
a=\left\{
\begin{array}{c}
p^{\delta}-1\\
p^{\delta}-2\\
p^{\delta}-3\\
p^{\delta}-p-2
\end{array} \right.
\end{equation}
\begin{itemize}
\item[(i)] If $a=p^{\delta}-p-2$. Since $a\geq 0$, $\delta\geq 2$. One can use the strategy of Lemma~\ref{relationforp2} and get a contradiction with $P_2(S)=1$ if 
$$2(p^{\delta}-p-2)\geq p^{\delta},$$
that is
$$2\geq \frac{p^{\delta}}{p^{\delta}-p-2}=\frac{p^{\delta}-p-2+p+2}{p^{\delta}-p-2}=1+\frac{p+2}{p^{\delta}-p-2}.$$
That is true if $\frac{p+2}{p^{\delta}-p-2}\leq 1$, equivalently:
$$p^{\delta}-2p-4\geq 0.$$
Since $\delta\geq 2$, this last inequality is satisfied when the characteristic $p$ is neither 2 nor 3. 
\item[(ii)] If $a=p^{\delta}-s$ with $s=1,2$ or $3$. The contradiction with $P_2(S)=1$  can be obtained using the strategy of Lemma~\ref{relationforp2} if
$$2(p^{\delta}-s)\geq p^{\delta},$$
equivalently:
$$2\geq \frac{p^{\delta}}{p^{\delta}-s}=\frac{p^{\delta}-s+s}{p^{\delta}-s}=1+\frac{s}{p^{\delta}-s}.$$
Therefore one needs $\frac{s}{p^{\delta}-s}\leq 1$, that is $p^{\delta}-2s\geq0$. This is true if $s=1$ or $2$, while if $s=3$ it is false in a handful of cases if $p=2$ or $3$, and also when $p=5$ and $\delta=1$.
\end{itemize}
\end{proof}

\begin{remark3} \label{caratter5} The proof of Lemma~\ref{onewildlemma} does not go through in characteristic $5$ only  because of the possible existence of a fibration with one wild fibre $5P$ of multiplicity $5$, order $1$, and such that  $\omega_S=\sO(2P)$.
\end{remark3}
Actually, modifying point (ii) in the proof of Lemma~\ref{onewildlemma} by checking for which integers $n$ one has
\begin{equation}
n(5^{\delta}-s)\geq 5^{\delta},
\end{equation}
one gets the following remark:

\begin{remark4} \label{caratt5secondo}
Let $S$ be a minimal surface over an algebraically closed field of characteristic $5$ with $P_1(S)=P_n(S)=1$ for a fixed $n\in\mathbb{N}_{\geq 3}$, $h^1(S,\sO_S)=2$ and Kodaira dimension 1. Assume that the elliptic fibration of $S$ is onto a curve $B$ of genus 1. Then it is not possible to have exactly 1 multiple fibre which is wild.
\end{remark4}
 Having proved Proposition~\ref{proposizionepergenerezero} and having recovered the case of $p=5$ in Remark~\ref{caratter5} and Remark~\ref{caratt5secondo}, Theorem~\ref{mainthm} holds.\\
As a final remark, one could have proved Proposition~\ref{proposizionepergenerezero} in a slightly different fashion by following the computations in the proof of \cite[Theorem 5.2]{KU85} and specializing them to the case at hand, thus getting the statement of Proposition~\ref{KUrisolvetutto} (which is stronger than Proposition~\ref{proposizionepergenerezero}).\\
The case of elliptic surface in characteristic 5 that did not allow to state Lemma~\ref{onewildlemma} for $p\geq 5$ is the same that forces in Proposition~\ref{KUrisolvetutto} to distinguish between $p\geq 5$ and $p\geq 7$, and it is the same case that has been dealt with in Remark~\ref{caratter5} and Remark~\ref{caratt5secondo}.

\begin{proposition2} \label{KUrisolvetutto}
Let  $f:\;S\longrightarrow B$ be an algebraic elliptic surface in characteristic $p\geq 5$ with $g(B)=0$, $\kappa(S)=1$, $P_1(S)=1$, $h^1(S,\sO_S)=2$ then $\vert mK_S\vert$ gives the unique structure of the elliptic surface when $m\geq 3$.
If $p\geq 7$,then the same holds for $m\geq 2$. 
\end{proposition2}
\begin{proof}
Since $g(B)=0$, $\chi(\sO_S)=0$ and (as seen in (ii) of Lemma~\ref{relations}) $t=h^0(B,T)=-\deg L=2$ , one is in the situation (III) of the proof of \cite[Theorem 5.2]{KU85}, that is, one gets that, for $m\in\mathbb{N}$, $\vert mK_S\vert$ gives the unique structure of the elliptic surface if, with the notation of Theorem~\ref{Kodairacanonical},
\begin{equation}
\mathfrak{D}:=\sum_{\alpha}\Bigg\lfloor \frac{m\,a_{\alpha}}{m_{\alpha}} \Bigg\rfloor \geq 1. \label{ineqproofKU}
\end{equation}

It is directly stated and proved in \cite{KU85} that if the elliptic fibration $f:\;S\longrightarrow B$ has at least one tame fibre then (\ref{ineqproofKU}) is satisfied for $m\geq 2$ (and by their computations, the same is true if there is at least one wild fibre of strange type, i.e a wild fibre with $a_{\alpha}=m_{\alpha}-1$). So one can reduce to the case where the only multiple fibres are not tame and not wild of strange type. By $h^0(B,T)=2$, there are at most two such fibres.\\
\\
If there is exactly one wild fibre not of strange type then, following \cite{KU85}, only three cases are possible:
\begin{itemize}
\item[(i)] $a_1=m_1-\nu_1-1$,
\item[(ii)] $a_1=m_1-2\nu_1-1$,
\item[(iii)] $a_1=m_1-(p+1)\nu_1-1$.
\end{itemize}
As shown in \cite{KU85}, in case (i) $\mathfrak{D}=\lfloor m(1-\frac{1}{p^{\gamma}}-\frac{1}{p^{\gamma}\nu_1})\rfloor$, with $\gamma$, $\nu\geq 1$. Here, taking $p\geq 5$, one has $\mathfrak{D}\geq\lfloor m(1-\frac{1}{5}-\frac{1}{5})\rfloor = \lfloor m\frac{3}{5}\rfloor$, and so (\ref{ineqproofKU}) is satisfied for $m\geq 2$.\\
Similarly, in case (ii) $\mathfrak{D}=\lfloor m(1-\frac{2}{p^{\gamma}}-\frac{1}{p^{\gamma}\nu_1})\rfloor$, with $\gamma$, $\nu\geq 1$. If $p\geq 5$, then $\mathfrak{D}\geq\lfloor m(1-\frac{2}{5}-\frac{1}{5})\rfloor = \lfloor m\frac{2}{5}\rfloor$, and so (\ref{ineqproofKU}) is satisfied for $m\geq 3$. If $p\geq 7$, then $\mathfrak{D}\geq\lfloor m(1-\frac{2}{7}-\frac{1}{7})\rfloor = \lfloor m\frac{4}{7}\rfloor$, and so (\ref{ineqproofKU}) is satisfied for $m\geq 2$.\\
In case (iii), since $m_1=p^{\gamma}\nu_1$,
\begin{equation}
\mathfrak{D}=\Bigg\lfloor m\,\frac{p^{\gamma}\nu_1-(p+1)\nu_1-1}{p^{\gamma}\nu_1}\Bigg\rfloor=\Bigg\lfloor m\Bigg(1-\frac{1}{p^{\gamma-1}}-\frac{1}{p^{\gamma}}-\frac{1}{p^{\gamma}\nu_1}\Bigg)\Bigg\rfloor
\end{equation}
with $\gamma\geq 2$ because of $a_1>0$. If $p\geq 5$,
\begin{equation}
\mathfrak{D}\geq \Bigg\lfloor m\Bigg(1-\frac{1}{5}-\frac{1}{5^2}-\frac{1}{5^{2}}\Bigg)\Bigg\rfloor=\Bigg\lfloor m\,\frac{18}{25}\Bigg\rfloor
\end{equation}
and therefore (\ref{ineqproofKU}) is satisfied for $m\geq 2$.\\
\\
If there are exactly two wild fibres, it is shown directly in \cite{KU85} that (\ref{ineqproofKU}) holds when $m\geq 4$, but since in that case the $a_{\alpha}$ are exactly those of case (i) just above, taking $p\geq 5$ (\ref{ineqproofKU}) is satisfied for $m\geq 2$. 
\end{proof}

\textsc{Matematisk Institutt, Universitetet i Bergen}\\
\indent\textit{E--mail address}: \texttt{Eugenia.Ferrari@uib.no}
\end{document}